\crefname{thm}{Theorem}{Theorems}
\crefname{pro}{Proposition}{Propositions}
\crefname{lem}{Lemma}{Lemmas}
\crefname{rmk}{Remark}{Remarks}
\crefname{cor}{Corollary}{Corollaries}
\crefname{dfn}{Definition}{Definitions}
\crefname{ex}{Example}{Examples}
\crefname{section}{Section}{Sections}
\crefname{subsection}{Subsection}{Subsections}
\newcommand{\eps}{\varepsilon}
\newcommand{\To}{\rightarrow}
\newcommand{\as}{{\rm d}\mathbb{P}\times{\rm d} t-a.e.}
\newcommand{\ps}{\mathbb{P}-a.s.}
\newcommand{\F}{\mathcal{F}}
\newcommand{\E}{\mathbb{E}}
\newcommand{\s}{\mathcal{S}}
\newcommand{\mcal}{\mathcal{M}}
\newcommand{\T}{[0,T]}
\newcommand{\R}{{\mathbb R}}
\newcommand {\Dis}{\displaystyle}
\newtheorem{thm}{Theorem}[section]
\newtheorem{pro}[thm]{Proposition}
\newtheorem{rmk}[thm]{Remark}
\newtheorem{ex}[thm]{Example}
\journal{}
\begin{document}
\begin{frontmatter}

\title{{On the uniqueness of solutions to quadratic BSDEs with non-convex generators and unbounded terminal conditions}\tnoteref{found}}
\tnotetext[found]{Shengjun Fan is supported by the State Scholarship Fund from the China Scholarship Council (No. 201806425013). Ying Hu is partially supported by Lebesgue center of mathematics ``Investissements d'avenir" program-ANR-11-LABX-0020-01, by CAESARS-ANR-15-CE05-0024 and by MFG-ANR-16-CE40-0015-01. Shanjian Tang is partially supported by National Science Foundation of China (No. 11631004) and Science and Technology Commission of Shanghai Municipality (No. 14XD1400400).
\vspace{0.2cm}}

%\date{April 23, 2012}

\author{Shengjun Fan\vspace{-0.5cm}\corref{cor1}}
\author{\ \ \ \ Ying Hu\corref{cor2}}
\author{\ \ \ \ Shanjian Tang$^\dag$\corref{cor3}}

\cortext[cor1]{\ School of Mathematics, China University of Mining and Technology, Xuzhou 221116, China. E-mail: f\_s\_j@126.com \vspace{0.2cm}}

\cortext[cor2]{\ Univ. Rennes, CNRS, IRMAR-UMR6625, F-35000, Rennes, France. E-mail: ying.hu@univ-rennes1.fr\vspace{0.2cm}}

\cortext[cor3]{${}^\dag$Department of Finance and Control Sciences, School of Mathematical Sciences, Fudan University, Shanghai 200433, China. E-mail: sjtang@fudan.edu.cn}

\begin{abstract}
We prove a uniqueness result of the unbounded solution for a quadratic backward stochastic differential equation whose terminal condition is unbounded and whose generator $g$ may be non-Lipschitz continuous in the state variable $y$, non-convex (non-concave) in the state variable $z$, and instead satisfies a strictly quadratic condition and an additional assumption. The key observation is that if the generator is strictly quadratic, then the quadratic variation of the first component of the solution admits an exponential moment. Typically, a Lipschitz perturbation of some convex (concave) function satisfies the additional assumption mentioned above. This generalizes some results obtained in \cite{BriandHu2006PTRF} and \cite{BriandHu2008PTRF}.
\vspace{0.2cm}
\end{abstract}

\begin{keyword}
Backward stochastic differential equation \sep Existence and uniqueness \sep \\ \hspace*{1.95cm}Quadratic growth \sep Unbounded terminal condition \sep Strictly quadratic condition. \vspace{0.2cm}

\MSC[2010] 60H10\vspace{0.2cm}
\end{keyword}

\end{frontmatter}
\vspace{-0.4cm}

\section{Introduction}
\label{sec:1-Introduction}
\setcounter{equation}{0}

Since the nonlinear backward stochastic differential equation (BSDE in short) was initially introduced in \cite{PardouxPeng1990SCL}, a lot of efforts have been made to study the well posedness, and many applications have been found in various fields such as mathematical finance, stochastic control and PDEs etc. In particular, quadratic BSDEs were first investigated in \cite{Kobylanski2000AP} for bounded terminal conditions, which have attracted much attention in recent years and it is the subject of this article.

We consider the following quadratic BSDE:
\begin{equation}\label{eq:1.1}
  Y_t=\xi+\int_t^T g(s,Y_s,Z_s){\rm d}s-\int_t^T Z_s \cdot {\rm d}B_s, \ \ t\in\T,
\end{equation}
where the terminal value $\xi$ is an unbounded random variable, and the generator $g$ has a quadratic growth in the variable $z$. In \cite{BriandHu2006PTRF}, the authors obtained the first existence result for this kind of BSDEs, when the terminal value has a certain exponential moment. The uniqueness results were established in \cite{BriandHu2008PTRF}, \cite{DelbaenHuRichou2011AIHPPS} and \cite{DelbaenHuRichou2015DCD} when the generator $g$ is Lipschitz continuous in $y$, and either convex or concave in $z$. The case of a non-convex generator $g$ was tackled in \cite{Richou2012SPA} and \cite{BriandAdrien2017Arxiv}, but more assumptions are imposed on the terminal value $\xi$ than the exponential integrability. In this paper, we prove a uniqueness result for the unbounded solution of quadratic BSDEs, where the generator $g$ may be non-Lipschitz and has a general growth in $y$, and non-convex (non-concave) in $z$, and no additional assumption is required on the terminal value. We suppose instead that the generator $g$ satisfies an additional assumption which holds typically for a (locally) Lipschitz perturbation of some convex (concave) function (see \ref{A:H4} and \cref{pro:2.3} for details), and is strictly quadratic, i.e., either
\[
g(\omega,t,y,z)\geq \frac{\bar\gamma}{2} |z|^2-\beta|y|-\alpha_t(\omega),\vspace{-0.1cm}
\]
or\vspace{-0.1cm}
\[
g(\omega,t,y,z)\leq -\frac{\bar\gamma}{2} |z|^2+\beta|y|+\alpha_t(\omega)\vspace{0.1cm}
\]
holds for two constants $\bar\gamma>0,\beta\geq 0$ and a nonnegative process $\alpha_\cdot$.
Under this condition, we can prove that if $(Y_\cdot, Z_\cdot)$ is a solution satisfying $\E[\sup_{t\in\T}\exp(p|Y_t|+p\int_0^t\alpha_s{\rm d}s)]<+\infty$ for some $p>0$, then there exists a constant $\eps>0$ such that $\E[\exp(\eps\int_0^T |Z_t|^2 {\rm d}t)]<+\infty$. See \cref{pro:2.2} for details.

Let us close this introduction by introducing some notations that will be used later. Fix the terminal time $T>0$ and a positive integer $d$, and let $x\cdot y$ denote the Euclidean inner product for $x,y\in \R^d$. Suppose that $(B_t)_{t\in\T}$ is a $d$-dimensional standard Brownian motion defined on some complete probability space $(\Omega, \F, \mathbb{P})$. Let $(\F_t)_{t\in\T}$ be the natural filtration generated by $B_\cdot$ and augmented by all $\mathbb{P}$-null sets of $\F$. All the processes are assumed to be $(\F_t)$-adapted.

Denote by ${\bf 1}_A(\cdot)$ the indicator of set $A$, and ${\rm sgn}(x):={\bf 1}_{x>1}-{\bf 1}_{x\leq 1}$. Let $a\wedge b$ be the minimum of $a$ and $b$, $a^-:=-(a\wedge 0)$ and $a^+:=(-a)^-$. For any real $p\geq 1$, let
$\s^p$ be the set of all progressively measurable and continuous real-valued processes $(Y_t)_{t\in\T}$ such that
$$\|Y\|_{{\s}^p}:=\left(\E[\sup_{t\in\T} |Y_t|^p]\right)^{1/p}<+\infty,\vspace{0.1cm}$$
and $\mcal^p$ the set of all progressively measurable $\R^d$-valued processes $(Z_t)_{t\in\T}$ such that
$$
\|Z\|_{\mcal^p}:=\left\{\E\left[\left(\int_0^T |Z_t|^2{\rm d}t\right)^{p/2}\right] \right\}^{1/p}<+\infty.\vspace{0.2cm}
$$

As mentioned before, we will study BSDEs of type \eqref{eq:1.1}. The terminal condition $\xi$ is real-valued and $\F_T$-measurable, and the process $g(\cdot, \cdot, y, z):\Omega\times\T\To \R $
is progressively measurable for each pair $(y,z)$ and continuous in $(y,z)$. By a solution to \eqref{eq:1.1}, we mean a pair of progressively measurable processes $(Y_t,Z_t)_{t\in\T}$, taking values in $\R\times\R^d$ such that $\ps$, the function $t\mapsto Y_t$ is continuous, $t\mapsto Z_t$ is square-integrable, $t\mapsto g(t,Y_t,Z_t)$ is integrable, and verifies \eqref{eq:1.1}. And, we usually denote by BSDE $(\xi,g)$ the BSDE whose terminal condition is $\xi$ and whose generator is $g$.

Finally, we recall that a process $(X_t)_{t\in\T}$ belongs to class (D) if the family of random variables $\{X_\tau : \tau $ is any stopping time taking values in $\T\}$ is uniformly integrable.

\section{Main result}
\label{sec:2-main result}
\setcounter{equation}{0}

We define the following function, for any non-negative integrable function $f(\cdot):\T\To [0,+\infty)$ and any constants $\kappa\geq 0$ and $\lambda>0$:
\begin{equation}\label{eq:2.1}
\psi(s,x;f_\cdot,\kappa,\lambda)=\exp\left(\lambda e^{\kappa s}x+\lambda\int_0^s f_r e^{\kappa r}{\rm d}r\right),\ \ (s,x)\in \T\times [0,+\infty).
\end{equation}
It is easy to verify that $\psi(\cdot,\cdot;f_\cdot,\kappa,\lambda)$ belongs to $\mathcal{C}^{1,2}(\T\times [0,+\infty))$,
\begin{equation}\label{eq:2.2}
-\psi_x(s,x;f_\cdot,\kappa,\lambda)(f_s+\kappa x)+\psi_s(s,x;f_\cdot,\kappa,\lambda)=0, \ \ \ \  (s,x)\in\T\times[0,+\infty)
\end{equation}
and
\begin{equation}\label{eq:2.3}
-\lambda \psi_x(s,x;f_\cdot,\kappa,\lambda)+\psi_{xx}(s,x;f_\cdot,\kappa,\lambda)\geq 0, \ \ \ \  (s,x)\in\T\times[0,+\infty),
\end{equation}
where and hereafter, $\psi_s(\cdot,\cdot; f_\cdot,\kappa,\lambda)$ denotes the first-order partial derivative with respect to time, and $\psi_x(\cdot,\cdot; f_\cdot,\kappa,\lambda)$ and $\psi_{xx}(\cdot,\cdot; f_\cdot,\kappa,\lambda)$ are the first-order and second-order partial derivatives with respect to space of the time-space function $\psi(\cdot,\cdot; f_\cdot,\kappa,\lambda)$.

In the whole paper, we always fix a progressively measurable non-negative process $(\alpha_t)_{t\in\T}$ and several real constants $\beta\geq 0$, $0<\bar\gamma\leq \gamma$, $k\geq 0$, $\bar k \geq 0$ and $\delta\in [0,1)$. Let us first introduce the following two assumptions on the generator $g$.
\begin{enumerate}
\renewcommand{\theenumi}{(H\arabic{enumi})}
\renewcommand{\labelenumi}{\theenumi}
\item\label{A:H1} $\as$, for each $(y,z)\in \R\times\R^d$, it holds that
    $$
    {\rm sgn}(y)g(\omega,t,y,z)\leq \alpha_t(\omega)+\beta|y|+\frac{\gamma}{2} |z|^2;
    $$
\item\label{A:H2} There exists a deterministic nondecreasing continuous function $\phi(\cdot):[0,+\infty)\To [0,+\infty)$ with $\phi(0)=0$ such that $\as$, for each $(y,z)\in \R\times\R^d$,
    $$
    |g(\omega,t,y,z)|\leq \alpha_t(\omega)+\phi(|y|)+\frac{\gamma}{2} |z|^2.
    $$
\end{enumerate}

The following proposition gives a slight generalization of the existence result of \cite{BriandHu2008PTRF} for quadratic BSDEs with unbounded terminal conditions.

\begin{pro}\label{pro:2.1}
Suppose that the function $\psi$ is defined in \eqref{eq:2.1} and that $\xi$ is a terminal condition and $g$ is a generator which is continuous in $(y,z)$ and satisfies assumptions \ref{A:H1} and \ref{A:H2}.

(i) Let $(Y_\cdot,Z_\cdot)$ be a solution to BSDE $(\xi,g)$ such that $(\psi^p(t,|Y_t|;\alpha_\cdot,\beta, \gamma))_{t\in\T}$ belongs to class (D) for some $p\geq 1$. Then, $\ps$, for each $t\in \T$,
\begin{equation}\label{eq:2.4}
p\gamma |Y_t|\leq \psi^p(t,|Y_t|;\alpha_\cdot,\beta, \gamma)+ \frac{1}{2} p(p-1)\gamma^2\E\left[\int_t^T |Z_s|^2{\rm d}s \bigg|\F_t\right]\leq \E[\psi^p(T,|\xi|;\alpha_\cdot,\beta, \gamma)\big| \F_t].
\end{equation}

(ii) If $\E[\psi^p(T,|\xi|;\alpha_\cdot,\beta, \gamma)]<+\infty$ for some $p\geq 1$, then BSDE $(\xi,g)$ admits a solution such that $(\psi^p(t,|Y_t|;\alpha_\cdot,\beta, \gamma))_{t\in\T}$ belongs to class (D). Moreover, if $p>1$, then there exists a constant $C>0$ depending only on $p$ such that
\begin{equation}\label{eq:2.5}
\E\left[\sup\limits_{t\in \T}\psi^p\left(t,|Y_t|;\alpha_\cdot,\beta, \gamma\right)\right]\leq C\E[\psi^p(T,|\xi|;\alpha_\cdot,\beta, \gamma)]
\end{equation}
and $Z_\cdot\in \mcal^2$. And, if $p>2$, then $Z_\cdot\in \mcal^p$.
\end{pro}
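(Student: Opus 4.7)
The identities \eqref{eq:2.2}--\eqref{eq:2.3} were engineered so that an It\^o application to $\Psi(t,x):=\psi^p(t,x;\alpha_\cdot,\beta,\gamma)$ along the semimartingale $(t,|Y_t|)$ exactly kills the drift terms produced by \ref{A:H1} while leaving a non-negative quadratic-in-$Z$ remainder. The plan is therefore: first use this computation to obtain (i) as a weighted sub-martingale inequality; then, in (ii), run a standard Briand--Hu truncation/approximation using (i) for tightness; finally, extract the norm estimate via BDG and a Young-type absorption.

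\textbf{Part (i).} Apply Tanaka's formula to $|Y|$ and then It\^o's formula to $\Psi(t,|Y_t|)$ with $(\kappa,\lambda)=(\beta,\gamma)$. Via \eqref{eq:2.2}, the time-derivative $\Psi_t dt$ pairs up with the $(\alpha_t+\beta|Y_t|)$-part of $\Psi_x d|Y_t|$ so as to cancel, while \ref{A:H1} controls the remaining $-{\rm sgn}(Y_t)g(t,Y_t,Z_t)$ contribution from below by $-\tfrac{\gamma}{2}|Z_t|^2$. Splitting $\Psi_{xx}=p(p-1)\psi^{p-2}\psi_x^2+p\psi^{p-1}\psi_{xx}$ and invoking \eqref{eq:2.3} shows that the second-order term dominates this $-\tfrac{p\gamma}{2}\psi^{p-1}\psi_x|Z_t|^2 dt$ cost, leaving the non-negative remainder $\tfrac12 p(p-1)\psi^{p-2}\psi_x^2|Z_t|^2 dt$; the local-time term at $0$ contributes with a positive sign as well. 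Hence $\Psi(t,|Y_t|)$ is a local sub-martingale. Localize by $\tau_n\uparrow T$, take $\E[\cdot|\F_t]$, and let $n\to\infty$ using the class (D) hypothesis to turn $\E[\Psi(\tau_n,|Y_{\tau_n}|)|\F_t]$ into $\E[\Psi(T,|\xi|)|\F_t]$; using $\psi\geq 1$ to get $\psi^{p-2}\psi_x^2=\gamma^2 e^{2\beta s}\psi^p\geq\gamma^2$ yields the right-hand inequality of \eqref{eq:2.4}, while the left-hand one is the elementary $e^x\geq x$.

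\textbf{Part (ii).} Set $\xi^n:=(\xi\wedge n)\vee(-n)$, and cap $g$ in the $y$-variable to reduce to linear $y$-growth. Kobylanski's theorem then yields a bounded solution $(Y^n,Z^n)$ to the approximating BSDE, and (i) gives $\Psi(t,|Y^n_t|)\leq\E[\psi^p(T,|\xi|)|\F_t]$ uniformly in $n$, so that the $y$-cap is inactive for $n$ large and the family $\{Y^n\}$ stays dominated by a UI martingale. A monotone stability argument in the spirit of \cite{BriandHu2008PTRF} then produces a limit $(Y,Z)$ solving $(\xi,g)$. For the moment bound \eqref{eq:2.5} with $p>1$, the It\^o expansion gives $\sup_{t\in\T}\Psi(t,|Y_t|)\leq\Psi(T,|\xi|)+2\sup_{t\in\T}|M_t|$ with $M_t:=\int_0^t p\psi^{p-1}\psi_x{\rm sgn}(Y_s)Z_s\cdot dB_s$; BDG bounds $\E[\sup_t|M_t|]$ by $\E[\langle M\rangle_T^{1/2}]$, and the factorisation $\langle M\rangle_T\leq p^2(\sup_t\psi^p)\int_0^T\psi^{p-2}\psi_x^2|Z_s|^2 ds$ combined with Young absorbs half of $\E[\sup_t\Psi]$ on the left; the residual $Z$-integral is controlled by (i) at $t=0$, which crucially uses $p>1$. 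The bound $Z\in\mcal^p$ for $p>2$ follows by the same BDG+Young template applied to $\bigl(\int_0^T|Z_s|^2 ds\bigr)^{p/2}$.

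\textbf{Main obstacle.} I expect the delicate step to be the limit passage in (ii): producing strong enough $\mcal^2$-convergence of $Z^n$ to take the limit in $\int_0^T g(s,Y^n_s,Z^n_s)ds$ despite the quadratic growth in $z$. This is handled by Kobylanski-type monotone stability applied to the approximating sequence, with the uniform domination $\Psi(t,|Y^n_t|)\leq\E[\psi^p(T,|\xi|)|\F_t]$ and the $L^2$-energy bound on $Z^n$ extracted from (i) providing the necessary compactness.
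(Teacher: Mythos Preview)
Your proposal is correct and, for part (i), follows essentially the same route as the paper. One cosmetic simplification the paper exploits: since $\psi^p(s,x;\alpha_\cdot,\beta,\gamma)=\psi(s,x;\alpha_\cdot,\beta,p\gamma)$, it applies It\^o--Tanaka directly to $\psi(\cdot,\cdot;\alpha_\cdot,\beta,p\gamma)$ and reads off \eqref{eq:2.2}--\eqref{eq:2.3} with $\lambda=p\gamma$, thereby avoiding the product-rule expansion of $\Psi_{xx}$ that you carry out. The resulting differential inequality is identical to yours.

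For part (ii) the existence argument is the same (truncate and cite Briand--Hu), but the derivation of \eqref{eq:2.5} differs. You go through BDG and a Young absorption of $\sup_t\Psi$. The paper takes a shorter path: the case $p=1$ of \eqref{eq:2.4} already says $\psi(t,|Y_t|;\alpha_\cdot,\beta,\gamma)\leq \E[\psi(T,|\xi|;\alpha_\cdot,\beta,\gamma)\mid\F_t]$, so $\psi^p(t,|Y_t|;\alpha_\cdot,\beta,\gamma)$ is dominated pointwise by the $p$-th power of a nonnegative martingale, and Doob's $L^p$-maximal inequality gives \eqref{eq:2.5} in one line, with no need to re-expand the It\^o identity or control the stochastic integral. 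Your BDG route is valid but requires, in addition, the bound $\E\bigl[\int_0^T\psi^{p-2}\psi_x^2|Z_s|^2\,{\rm d}s\bigr]\leq C\,\E[\psi^p(T,|\xi|)]$, which you obtain from the integrated It\^o inequality rather than from \eqref{eq:2.4} alone; this is fine, just heavier. The statement $Z\in\mcal^p$ for $p>2$ is, in both approaches, delegated to the Briand--Hu estimates.
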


\begin{proof} (i) Let $L_\cdot$ denote the local time of $Y_\cdot$ at $0$.  It\^{o}-Tanaka's formula applied to $\psi(s, |Y_s|; \alpha_\cdot,\beta, p\gamma)$ gives, in view of assumption \ref{A:H1},
$$
\begin{array}{ll}
&\Dis {\rm d}\psi(s,|Y_s|;\alpha_\cdot,\beta,p\gamma)\vspace{0.1cm}\\
=&\Dis -\psi_x(s,|Y_s|;\alpha_\cdot,\beta,p\gamma)
{\rm sgn}(Y_s)g(s,Y_s,Z_s){\rm d}s+\psi_x(s,|Y_s|;\alpha_\cdot,\beta,p\gamma){\rm sgn}(Y_s)Z_s \cdot {\rm d}B_s\vspace{0.1cm}\\
&\Dis +\psi_x(s,|Y_s|;\alpha_\cdot,\beta,p\gamma){\rm d}L_s+{1\over 2}\psi_{xx}(s,|Y_s|;\alpha_\cdot,\beta,p\gamma)|Z_s|^2{\rm d}s+\psi_s(s,|Y_s|;\alpha_\cdot,\beta,p\gamma){\rm d}s
\vspace{0.1cm}\\
\geq & \Dis \left[-\psi_x(s,|Y_s|;\alpha_\cdot,\beta,p\gamma)(\alpha_s+\beta |Y_s|)+\psi_s(s,|Y_s|;\alpha_\cdot,\beta,p\gamma)\right]{\rm d}s\vspace{0.1cm}\\
& \Dis
+{1\over 2}\left[-\gamma\psi_x(s,|Y_s|;\alpha_\cdot,\beta,p\gamma)|Z_s|^2+\psi_{xx}(s,|Y_s|;\alpha_\cdot,\beta,p\gamma)|Z_s|^2\right]{\rm d}s\vspace{0.1cm}\\
& \Dis +\psi_x(s,|Y_s|;\alpha_\cdot,\beta,p\gamma){\rm sgn}(Y_s) Z_s \cdot {\rm d}B_s.
\end{array}\vspace{0.1cm}
$$
Then, by virtue of \eqref{eq:2.2} and \eqref{eq:2.3} together with the fact that
$\psi_x(\cdot,\cdot;f_\cdot,\kappa,\lambda)\geq \lambda$, we have
\begin{equation}\label{eq:2.6}
{\rm d}\psi(s,|Y_s|;\alpha_\cdot,\beta,p\gamma)\geq \frac{1}{2}p(p-1)\gamma^2 |Z_s|^2 {\rm d}s+\psi_x(s,|Y_s|;\alpha_\cdot,\beta,p\gamma){\rm sgn}(Y_s) Z_s \cdot {\rm d}B_s,\ \ s\in \T.\vspace{0.1cm}
\end{equation}
Let us denote, for each $t\in\T$ and each integer $m\geq 1$, the following stopping time
$$
\tau_m^t:=\inf\left\{s\in [t,T]: \int_t^s \left(\psi_x(r,|Y_r|;\alpha_\cdot,\beta,p\gamma)\right)^2|Z_r|^2{\rm d}r\geq m \right\}\wedge T
$$
with the convention $\inf\Phi=+\infty$. It follows from \eqref{eq:2.6} and the definition of $\tau_m^t$ that for each $m\geq 1$,
$$
\psi(t,|Y_t|;\alpha_\cdot,\beta,p\gamma)+ \frac{1}{2}p(p-1)\gamma^2\E\left[\left.\int_t^{\tau_m^t}|Z_s|^2 {\rm d}s\right|\F_t\right] \leq \E\left[\left. \psi(\tau_m^t,|Y_{\tau_m^t}|;\alpha_\cdot,\beta,p\gamma) \right|\F_t\right],\ \ \ t\in \T.\vspace{0.1cm}
$$
Thus, since $(\psi(s,|Y_s|;\alpha_\cdot,\beta, p\gamma))_{s\in\T}$ belongs to class (D), the desired inequality \eqref{eq:2.4} follows by letting $m\To \infty$ and using Fatou's lemma in the last inequality.\vspace{0.1cm}

(ii) Thanks to (i), proceeding as in the proof of Proposition 3 in \cite{BriandHu2008PTRF} with a localization argument, we conclude that if $\E[\psi^p(T,|\xi|;\alpha_\cdot,\beta, \gamma)]<+\infty$ for some $p\geq 1$, then BSDE $(\xi,g)$ has a solution such that $(\psi^p(t,|Y_t|;\alpha_\cdot,\beta, \gamma))_{t\in\T}$ belongs to class (D) and \eqref{eq:2.4} holds. Moreover, for $p>1$, it is clear from \eqref{eq:2.4} that $Z_\cdot\in \mcal^2$. Since \eqref{eq:2.4} holds for $p=1$, we apply Doob's maximal inequality to get \eqref{eq:2.5}. Finally, the conclusion that $Z_\cdot\in \mcal^p$ for $p>2$ has been given in Corollary 4 of \cite{BriandHu2008PTRF}.
\end{proof}

To obtain a stronger integrability with respect to the process $Z_\cdot$, we need the following assumption, called strictly quadratic condition:\vspace{-0.1cm}
\begin{enumerate}
\renewcommand{\theenumi}{(H3)}
\renewcommand{\labelenumi}{\theenumi}
\item\label{A:H3} $\as$, for each $(y,z)\in \R\times\R^d$, it holds that
  \begin{equation}\label{eq:2.7}
   g(\omega,t,y,z)\geq \frac{\bar\gamma}{2} |z|^2-\beta|y|-\alpha_t(\omega),\vspace{-0.1cm}
  \end{equation}
   or\vspace{-0.1cm}
  \begin{equation}\label{eq:2.8}
    g(\omega,t,y,z)\leq -\frac{\bar\gamma}{2} |z|^2+\beta|y|+\alpha_t(\omega).\vspace{0.1cm}
  \end{equation}
\end{enumerate}

\begin{pro}\label{pro:2.2}
Let $\psi$ be defined in \eqref{eq:2.1}, $\xi$ be a terminal condition, $g$ be a generator satisfing \ref{A:H3}, and $(Y_\cdot,Z_\cdot)$ be a solution to BSDE $(\xi,g)$. If $\E[\sup_{t\in\T}\psi(t,|Y_t|;\alpha_\cdot,0,p_0)]<+\infty$ for some $p_0>0$, then for each $\eps\in (0,\eps_0]$ with $\eps_0:={\bar\gamma^2\over 18}\wedge {p_0\bar\gamma\over 12+6\beta T}$, we have
\begin{equation}\label{eq:2.9}
\E\left[\exp\left(\eps \int_0^T |Z_s|^2{\rm d}s\right)\right]<+\infty.
\end{equation}
In particular, for each $p>0$ and $\delta\in [0,1)$, $\E[\exp(p\int_0^T |Z_s|^{1+\delta}{\rm d}s)]<+\infty$.
\end{pro}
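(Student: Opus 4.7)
My plan is to proceed in three stages. First, I would reduce to the lower-bound case of \ref{A:H3} by sign inversion: if \eqref{eq:2.8} holds for $g$, then $(-Y_\cdot,-Z_\cdot)$ solves the BSDE with terminal condition $-\xi$ and generator $\bar g(s,y,z):=-g(s,-y,-z)$, which satisfies \eqref{eq:2.7} with the same constants, while the hypothesis on $\sup_t \psi(t,|Y_t|;\alpha_\cdot,0,p_0)$ is invariant under $Y\mapsto -Y$. Second, I would derive from the BSDE a pathwise inequality controlling $\int_0^t|Z_s|^2\,ds$; third, I would exponentiate this inequality, control the stochastic integral via the Dol\'eans--Dade exponential after localization, and conclude using Cauchy--Schwarz.

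For the pathwise inequality, subtracting \eqref{eq:1.1} at times $0$ and $t$ gives $\int_0^t g(s,Y_s,Z_s)\,ds=Y_0-Y_t+M_t$ where $M_t:=\int_0^t Z_s\cdot dB_s$. Inserting the lower bound \eqref{eq:2.7} yields
\[
\frac{\bar\gamma}{2}\int_0^t|Z_s|^2\,ds\leq |Y_0|+|Y_t|+\beta\!\int_0^t|Y_s|\,ds+\int_0^t\alpha_s\,ds+M_t\leq (2+\beta T)\Gamma+M_t,
\]
where $\Gamma:=\sup_{s\in\T}(|Y_s|+\int_0^s\alpha_r\,dr)$, and the last step bounds each of $|Y_0|$, $|Y_t|+\int_0^t\alpha_s\,ds$, and $\sup_s|Y_s|$ by $\Gamma$ (using $\int_0^s\alpha_r\,dr\geq 0$). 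The hypothesis now reads exactly $\E[\exp(p_0\Gamma)]<+\infty$ since $\sup_t\psi(t,|Y_t|;\alpha_\cdot,0,p_0)=\exp(p_0\Gamma)$.

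For the third stage, set $\tau_n:=\inf\{t\in\T:\int_0^t|Z_s|^2\,ds\geq n\}\wedge T$, so that $\int_0^{\tau_n}|Z_s|^2\,ds\leq n$ and for every $\lambda>0$ the Dol\'eans exponential $\mathcal{E}(\lambda Z\cdot B)_{\cdot\wedge\tau_n}$ is a true martingale; in particular $\E[\mathcal{E}(\lambda Z\cdot B)_{\tau_n}]=1$. Multiplying the pathwise bound at $t=\tau_n$ by $6\eps/\bar\gamma$ and invoking the identity $\frac{6\eps}{\bar\gamma}M_{\tau_n}=\ln\mathcal{E}(\frac{6\eps}{\bar\gamma}Z\cdot B)_{\tau_n}+\frac{18\eps^2}{\bar\gamma^2}\int_0^{\tau_n}|Z_s|^2\,ds$ yields
\[
\Bigl(3\eps-\frac{18\eps^2}{\bar\gamma^2}\Bigr)\!\int_0^{\tau_n}|Z_s|^2\,ds\leq \frac{6\eps(2+\beta T)}{\bar\gamma}\Gamma+\ln\mathcal{E}\Bigl(\frac{6\eps}{\bar\gamma}Z\cdot B\Bigr)_{\tau_n}.
\]
For $\eps\leq \bar\gamma^2/18$ the left coefficient is at least $2\eps$, so exponentiating, taking the $1/2$-th power, applying Cauchy--Schwarz, and using $\E[\mathcal{E}(\frac{6\eps}{\bar\gamma}Z\cdot B)_{\tau_n}]=1$ gives
\[
\E\Bigl[\exp\Bigl(\eps\!\int_0^{\tau_n}|Z_s|^2\,ds\Bigr)\Bigr]\leq \E\Bigl[\exp\Bigl(\tfrac{6\eps(2+\beta T)}{\bar\gamma}\Gamma\Bigr)\Bigr]^{1/2}.
\]
For $\eps\leq p_0\bar\gamma/(12+6\beta T)$ the coefficient of $\Gamma$ is at most $p_0$, so the RHS is bounded by $\E[\exp(p_0\Gamma)]^{1/2}<+\infty$, uniformly in $n$. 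Fatou's lemma as $n\to\infty$ delivers \eqref{eq:2.9}. The final assertion on $\int_0^T|Z_s|^{1+\delta}\,ds$ follows from Young's inequality $|z|^{1+\delta}\leq \eta|z|^2+C(\eta,\delta)$ applied with $\eta<\eps_0/p$.

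The subtle point, which pins down the precise value of $\eps_0$, is the balance in the concluding Cauchy--Schwarz: one must arrange things so that the Dol\'eans exponential enters to its first power (where its expectation is exactly $1$) rather than to a higher power, which would reintroduce $\exp(\cdot\int_0^{\tau_n}|Z|^2\,ds)$ in a circular bootstrap that cannot close. The ``square-root'' trick of taking the $1/2$ power of the exponentiated inequality before applying Cauchy--Schwarz is what makes this succeed at exactly the threshold $\bar\gamma^2/18$.
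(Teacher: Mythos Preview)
Your proof is correct and follows essentially the same route as the paper: derive the pathwise bound $\frac{\bar\gamma}{2}\int_0^{\tau_n}|Z_s|^2\,ds\le (2+\beta T)\Gamma+M_{\tau_n}$ from \ref{A:H3}, exponentiate after localization, and absorb the stochastic integral via the Dol\'eans--Dade exponential using H\"older's inequality. The only cosmetic difference is that the paper applies a three-way H\"older split (exponents $3,3,3$) directly to the product $\exp(\eps X)\cdot\exp(\eps M-\tfrac{3}{2}\eps^2[M])\cdot\exp(\tfrac{3}{2}\eps^2[M])$, whereas you first move the quadratic-variation term to the left before taking square roots and using Cauchy--Schwarz; both arrangements yield the same threshold $\eps_0$.
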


\begin{proof}
We only consider the case that \eqref{eq:2.7} holds. The other case is similar.  Since $(Y_\cdot,Z_\cdot)$ is a solution to BSDE $(\xi,g)$ and \eqref{eq:2.7} holds, we have for each $n\geq 1$,
$$
\frac{\bar\gamma}{2}\int_0^{\sigma_n} |Z_s|^2{\rm d}s \leq \Dis  Y_0-Y_{\sigma_n}+\int_0^{\sigma_n}(\alpha_s+\beta |Y_s|){\rm d}s+\int_0^{\sigma_n} Z_s\cdot {\rm d}B_s\leq X+\int_0^{\sigma_n} Z_s\cdot {\rm d}B_s,
$$
where $X:=(2+\beta T)\sup_{t\in\T}|Y_t|+\int_0^T\alpha_s{\rm d}s$, and
$$
\sigma_n:=\inf\left\{s\in [0,T]: \int_0^s |Z_r|^2{\rm d}r\geq n \right\}\wedge T.
$$
Then, for each $\eps>0$ such that $3\eps(2+\beta T)\leq p_0$, we have
$$
\exp\left(\frac{\bar\gamma}{2}\eps\int_0^{\sigma_n} |Z_s|^2{\rm d}s\right) \leq \exp(\eps X)\exp\left(\eps\int_0^{\sigma_n} Z_s\cdot {\rm d}B_s-\frac{3}{2}\eps^2\int_0^{\sigma_n} |Z_s|^2{\rm d}s\right)\exp\left(\frac{3}{2}\eps^2\int_0^{\sigma_n} |Z_s|^2{\rm d}s\right).
$$
Observe that the process
$$
H(t):=\exp\left(3\eps\int_0^{t\wedge\sigma_n} Z_s\cdot {\rm d}B_s-\frac{9}{2}\eps^2\int_0^{t\wedge\sigma_n} |Z_s|^2{\rm d}s\right)
$$
is a positive martingale with $H(0)=1$. By taking mathematical expectation in the last inequality and applying H\"{o}lder's inequality, we obtain
$$
\E\left[\exp\left(\frac{\bar\gamma}{2}\eps\int_0^{\sigma_n} |Z_s|^2{\rm d}s\right)\right] \leq \left(\E\left[\exp(3\eps X)\right]\right)^{1\over 3} \left(\E\left[\exp\left(\frac{9}{2}\eps^2\int_0^{\sigma_n}|Z_s|^2{\rm d}s\right)\right]\right)^{1\over 3}.
$$
Consequently, for $\eps\leq \bar\gamma/9$, we have
$$
\left(\E\left[\exp\left(\frac{\bar\gamma}{2}\eps\int_0^{\sigma_n} |Z_s|^2{\rm d}s\right)\right]\right)^{2\over 3} \leq \left(\E\left[\exp(3\eps X)\right]\right)^{1\over 3}<+\infty,
$$
which yields the inequality \eqref{eq:2.9} immediately from Fatou's lemma. Finally, for each $p>0$, $\delta\in [0,1)$, $x\geq 0$ and $\eps\in (0,\eps_0]$, it follows from Young's inequality that
$$
px^{1+\delta}=\left({2\over 1+\delta}\eps x^2\right)^{1+\delta\over 2}\left(p^{2\over 1-\delta}\left({2\over 1+\delta}\eps\right)^{-\frac{1+\delta}{1-\delta}}\right)^{1-\delta\over 2} \leq \eps x^2+{1-\delta\over 2}p^{2\over 1-\delta}\left({1+\delta\over 2\eps}\right)^{\frac{1+\delta}{1-\delta}}.
$$
Thus, the desired conclusion follows from \eqref{eq:2.9}. The proof is complete.
\end{proof}

In what follows, the following assumption on the generator $g$ will be used.
\begin{enumerate}
\renewcommand{\theenumi}{(H4)}
\renewcommand{\labelenumi}{\theenumi}
\item\label{A:H4} $\as$, for each $(y_i,z_i)\in \R\times\R^d$, $i=1,2$ and each $\theta\in (0,1)$, it holds that
  \begin{equation}\label{eq:2.10}
  \begin{array}{ll}
   &\Dis {\bf 1}_{\{y_1-\theta y_2>0\}}\left(g(\omega,t,y_1,z_1)-\theta g(\omega,t,y_2,z_2)\right)\vspace{0.1cm}\\
   \leq & \Dis (1-\theta)\left(\beta \left|\delta_\theta y\right|+\gamma \left|\delta_\theta z\right|^2+h(\omega,t,y_1,y_2,z_1,z_2,\delta)\right)
   \end{array}
  \end{equation}
   or
  \begin{equation}\label{eq:2.11}
  \begin{array}{ll}
   &\Dis {\bf 1}_{\{\theta y_1-y_2>0\}}\left(\theta g(\omega,t,y_1,z_1)- g(\omega,t,y_2,z_2)\right)\vspace{0.1cm}\\
   \leq &\Dis (1-\theta)\left(\beta \left|\bar\delta_\theta y\right|+\gamma \left|\bar\delta_\theta z\right|^2+h(\omega,t,y_1,y_2,z_1,z_2,\delta)\right),
  \end{array}
  \end{equation}
  where
  $$
  \delta_\theta y:=\frac{y_1-\theta y_2}{1-\theta},\ \ \  \delta_\theta z:=\frac{z_1-\theta z_2}{1-\theta},\ \ \ \bar\delta_\theta y:=\frac{\theta y_1- y_2}{1-\theta},\ \ \  \bar\delta_\theta z:=\frac{\theta z_1-z_2}{1-\theta},
  $$
and
  $$
  h(\omega,t,y_1,y_2,z_1,z_2,\delta):=\alpha_t(\omega)+k(|y_1|+|y_2|)
  +\bar k\left(|z_1|^{1+\delta}+ |z_2|^{1+\delta}\right).
  $$
\end{enumerate}

One typical example of \ref{A:H4} is
$$g(\omega,t,y,z):=g_1(z)+g_2(z),$$
where $g_1:\R^d\To\R$ is convex or concave with quadratic growth, and $g_2:\R^d\To\R$ is Lipschitz continuous, i.e., $g$ is a Lipschitz perturbation of some convex (concave) function.
More generally, we have

\begin{pro}\label{pro:2.3}
Assumption \ref{A:H4} holds for the generator $g$ as soon as it is continuous in $(y,z)$ and satisfies \ref{A:H1} together with anyone of the following conditions:
\begin{enumerate}
\item [(i)] $\as$, $g(\omega,t,\cdot,\cdot)$ is convex or concave;

\item [(ii)] $g$ is Lipschitz in the variable $y$ and $\delta$-locally Lipschitz in the variable $z$, i.e., $\as$, for each $(y_i,z_i)\in \R\times\R^d$, $i=1,2$, we have
\begin{equation}\label{eq:2.12}
|g(\omega,t,y_1,z_1)-g(\omega,t,y_2,z_2)|\leq \beta |y_1-y_2|+\gamma (1+|z_1|^\delta+|z_2|^\delta)|z_1-z_2|;\vspace{-0.1cm}
\end{equation}

\item [(iii)] $\as$, for each $(y,z)\in \R\times\R^d$, $g(\omega,t,\cdot,z)$ is Lipschitz, and $g(\omega,t, y,\cdot)$ is convex or concave;

\item [(iv)] $\as$, for each $(y,z)\in \R\times\R^d$, $g(\omega,t,\cdot,z)$ is convex or concave, and $g(\omega,t, y,\cdot)$ is $\delta$-locally Lipschitz, i.e., \eqref{eq:2.12} holds with $y_1=y_2=y$.
\end{enumerate}
\end{pro}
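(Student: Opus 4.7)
The plan is to verify \ref{A:H4} case by case, always reducing to four ingredients: a Jensen-type convex/concave decomposition in one or both variables, (locally) Lipschitz estimates to swap arguments, assumption \ref{A:H1} applied at the point $(\delta_\theta y,\delta_\theta z)$ (or $(-\bar\delta_\theta y,-\bar\delta_\theta z)$ in the concave variant), legitimate on $\{\delta_\theta y>0\}$ (resp.\ $\{\bar\delta_\theta y>0\}$) because there ${\rm sgn}(\delta_\theta y)=1$ (resp.\ ${\rm sgn}(-\bar\delta_\theta y)=-1$), and Young's inequality to rearrange every $z$-cross term into the target shape $\gamma|\delta_\theta z|^2+\bar k(|z_1|^{1+\delta}+|z_2|^{1+\delta})$.

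Case (i) is the template. If $g(\omega,t,\cdot,\cdot)$ is convex then, writing $(y_1,z_1)=(1-\theta)(\delta_\theta y,\delta_\theta z)+\theta(y_2,z_2)$, convexity yields $g(t,y_1,z_1)\leq (1-\theta)g(t,\delta_\theta y,\delta_\theta z)+\theta g(t,y_2,z_2)$, and on $\{\delta_\theta y>0\}$ the remaining value $g(t,\delta_\theta y,\delta_\theta z)$ is controlled by \ref{A:H1}, giving \eqref{eq:2.10}. For $g$ concave I would write $(y_2,z_2)=\theta(y_1,z_1)-(1-\theta)(\bar\delta_\theta y,\bar\delta_\theta z)$ and apply \ref{A:H1} at $(-\bar\delta_\theta y,-\bar\delta_\theta z)$, producing \eqref{eq:2.11}.

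For case (ii) I would split
\[ g(t,y_1,z_1)-\theta g(t,y_2,z_2)=\bigl[g(t,y_1,z_1)-g(t,y_2,z_2)\bigr]+(1-\theta)g(t,y_2,z_2). \]
The bracket is controlled by \eqref{eq:2.12}; since $y_1-y_2=(1-\theta)(\delta_\theta y-y_2)$, $z_1-z_2=(1-\theta)(\delta_\theta z-z_2)$ and $|z_1|^\delta+|z_2|^\delta\leq|\delta_\theta z|^\delta+2|z_2|^\delta$, every cross term splits via Young's inequality into a small multiple of $|\delta_\theta z|^2$ plus a multiple of $|z_1|^{1+\delta}+|z_2|^{1+\delta}$ (the linear $|z_2|$ is absorbed because $\delta<1$). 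For the leftover $(1-\theta)g(t,y_2,z_2)$, I would first deduce $|g(t,0,0)|\leq\alpha_t$ from \ref{A:H1} by letting $y\to 0^\pm$ in the one-sided bound at $z=0$, then chain this with Lipschitz-in-$y$ and local-Lipschitz-in-$z$ to get $|g(t,y_2,z_2)|\leq\alpha_t+\beta|y_2|+\gamma(|z_2|+|z_2|^{1+\delta})$, finishing again by Young.

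Cases (iii) and (iv) recombine the two tactics above. In (iii), $g$ Lipschitz in $y$ and convex in $z$, I would peel off $g(t,y_1,z_1)-g(t,y_2,z_1)\leq \beta|y_1-y_2|$, decompose the rest by $z$-convexity as $g(t,y_2,z_1)\leq(1-\theta)g(t,y_2,\delta_\theta z)+\theta g(t,y_2,z_2)$, and use a second Lipschitz-in-$y$ step to replace $g(t,y_2,\delta_\theta z)$ by $g(t,\delta_\theta y,\delta_\theta z)$; each Lipschitz-in-$y$ step produces an extra factor $(1-\theta)(|\delta_\theta y|+|y_2|)$ absorbed into the $\beta|\delta_\theta y|$ and $k|y_2|$ slots of \eqref{eq:2.10}, and \ref{A:H1} applied at $(\delta_\theta y,\delta_\theta z)$ closes the bound. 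In (iv), $g$ convex in $y$ and $\delta$-locally Lipschitz in $z$, $y$-convexity first gives $g(t,y_1,z_1)\leq(1-\theta)g(t,\delta_\theta y,z_1)+\theta g(t,y_2,z_1)$, after which local-Lipschitz-in-$z$ is applied twice (to move $z_1\mapsto\delta_\theta z$ inside $g(t,\delta_\theta y,z_1)$, and to bound $g(t,y_2,z_1)-g(t,y_2,z_2)$); this lands on exactly the same Young bookkeeping as case (ii), and one concludes via \ref{A:H1} at $(\delta_\theta y,\delta_\theta z)$. The concave subcases of (iii)--(iv) are symmetric and yield \eqref{eq:2.11}. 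I expect the main obstacle to be the precise Young splitting of the product $(1+|z_1|^\delta+|z_2|^\delta)(|\delta_\theta z|+|z_2|)$, chosen so that the coefficient of $|\delta_\theta z|^2$ does not exceed $\gamma$; once this bookkeeping is settled in case (ii), it transfers essentially verbatim to case (iv).
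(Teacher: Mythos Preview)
Your proposal is correct and follows the paper's own strategy: in each case you combine a convex/concave decomposition in the relevant variable(s), a (local) Lipschitz swap, assumption \ref{A:H1} applied at the $\theta$-difference point, and Young's inequality to clean up the $z$-terms. The only tactical differences are in (iii) and (iv): the paper, after the convexity step, moves the remaining argument to $0$ rather than to $\delta_\theta y$ (in (iii)) or to $\delta_\theta z$ (in (iv))---e.g., in (iii) it bounds $g(t,y_2,\delta_\theta z)$ via $|g(t,y_2,\delta_\theta z)-g(t,0,\delta_\theta z)|+|g(t,0,\delta_\theta z)|$ and uses $|g(t,0,z)|\leq\alpha_t+\tfrac{\gamma}{2}|z|^2$, and in (iv) it bounds $g(t,\delta_\theta y,z_1)$ via $|g(t,\delta_\theta y,z_1)-g(t,\delta_\theta y,0)|+g(t,\delta_\theta y,0)$; this avoids the factor $|\delta_\theta z|^\delta$ in the Young bookkeeping and keeps the constants slightly tighter, but your route works as well.
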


Before giving the proof of this proposition, we first make the following important remark.

\begin{rmk}\label{rmk:2.4}
It is easy to verify that assumption \ref{A:H4} also holds for the linear combination of two or several generators that is continuous in $(y,z)$ and satisfies assumption \ref{A:H1} together with anyone of conditions in \cref{pro:2.3} (with the same convexity or concavity when available), but with different parameters. Hence, the generator $g$ satisfying assumption \ref{A:H4} is not necessarily convex (concave) or Lipschitz with the variables $y$ and $z$, and it may have a general growth with respect to the variable $y$.

\end{rmk}

\begin{proof} [Proof of \cref{pro:2.3}] Given $(y_i,z_i)\in \R\times\R^d$, $i=1,2$ and $\theta\in (0,1)$.

(i) Assume that $\as$, $g(\omega,t,\cdot,\cdot)$ is convex. In view of \ref{A:H1}, if $\delta_\theta y>0$, then
$$
\begin{array}{lll}
g(\omega,t,y_1,z_1)&=&\Dis g\left(\omega,t,\theta y_2+(1-\theta)\delta_\theta y,\theta z_2+(1-\theta)\delta_\theta z\right)\vspace{0.1cm}\\
&\leq & \Dis \theta g(\omega,t,y_2,z_2)+(1-\theta)g\left(\omega,t,\delta_\theta y,\delta_\theta z\right)\vspace{0.1cm}\\
&\leq & \Dis \theta g(\omega,t,y_2,z_2)+(1-\theta)\left(\alpha_t(\omega)+\beta |\delta_\theta y|+\frac{\gamma}{2}|\delta_\theta z|^2\right).
\end{array}
$$
Thus, the inequality \eqref{eq:2.10} holds with $\gamma/2$ instead of $\gamma$, $k=0$ and $\bar k=0$. The concave case is similar.

(ii) Let the inequality \eqref{eq:2.12} holds. Note by \ref{A:H1} that $|g(\omega,t,0,0)|\leq \alpha_t(\omega)$. Then, in view of the fact that $2\delta<1+\delta$ and by virtue of Young's inequality we can deduce that for each $\eps>0$,
$$
\begin{array}{ll}
& \Dis g(\omega,t,y_1,z_1)-\theta g(\omega,t,y_2,z_2)\vspace{0.1cm}\\
\leq & \Dis |g(\omega,t,y_1,z_1)- g(\omega,t,y_2,z_2)|+(1-\theta)|g(\omega,t,y_2,z_2)|\vspace{0.1cm}\\
\leq & \Dis \beta |y_1-y_2|+\gamma \left(1+|z_1|^\delta+|z_2|^\delta\right)|z_1-z_2|+(1-\theta)
\left(|g(\omega,t,0,0)|+\beta |y_2|+\gamma \left(1+|z_2|^\delta\right)|z_2|\right)
\vspace{0.1cm}\\
\leq & \Dis \beta (|y_1-\theta y_2|+(1-\theta)|y_2|)+\gamma \left(1+|z_1|^\delta+|z_2|^\delta\right)(|z_1-\theta z_2|+ (1-\theta) |z_2|)\vspace{0.1cm}\\
& \Dis +(1-\theta)\left(\alpha_t(\omega)+\beta |y_2|+\gamma \left(1+|z_2|^\delta\right)|z_2|\right)\vspace{0.1cm}\\
\leq & \Dis (1-\theta)\left[\beta |\delta_\theta y|+2\beta |y_2|+\alpha_t(\omega)+\eps |\delta_\theta z|^2+c\left(1+|z_1|^{1+\delta}+|z_2|^{1+\delta}\right)\right],\vspace{0.1cm}\\
\end{array}
$$
where $c$ is a constant depending only on $(\gamma, \delta, \eps)$. Thus, the inequality \eqref{eq:2.10} holds with $\eps$, $\alpha_\cdot+c$, $2\beta$ and $c$ instead of $\gamma$, $\alpha_\cdot$, $k$ and $\bar k$ respectively.

(iii) Assume that $\as$, for each $(y,z)\in \R\times\R^d$, $g(\omega,t,\cdot,z)$ is Lipschitz, and $g(\omega,t, y,\cdot)$ is convex. Then, noticing by \ref{A:H1} that $|g(\omega,t,0,z)|\leq \alpha_t+\gamma|z|^2 /2 $, we have
$$
\begin{array}{ll}
&\Dis g(\omega,t,y_1,z_1)-\theta g(\omega,t,y_2,z_2)\vspace{0.1cm}\\
\leq & \Dis |g(\omega,t,y_1,z_1)-g(\omega,t,y_2,z_1)|+g(\omega,t,y_2,z_1)-\theta g(\omega,t,y_2,z_2)\vspace{0.1cm}\\
\leq & \Dis \beta |y_1-y_2|+g(\omega,t,y_2,\theta z_2+(1-\theta)\delta_\theta z)-\theta g(\omega,t,y_2,z_2)
\vspace{0.1cm}\\
\leq & \Dis \beta (|y_1-\theta y_2|+(1-\theta)|y_2|)+(1-\theta)\left(|g(\omega,t,y_2,\delta_\theta z)-g(\omega,t,0,\delta_\theta z)|+|g(\omega,t,0,\delta_\theta z)|\right)\vspace{0.1cm}\\
\leq & \Dis (1-\theta)\left(\beta |\delta_\theta y|+2\beta |y_2|+\alpha_t(\omega)+\frac{\gamma}{2} |\delta_\theta z|^2\right),\vspace{0.1cm}\\
\end{array}
$$
Thus, \eqref{eq:2.10} holds with $\gamma/2$ instead of $\gamma$, $2\beta$ instead of $k$, and $\bar k=0$ . The concave case is similar.

(iv) We only consider the case that $\as$, for each $(y,z)\in \R\times\R^d$, $g(\omega,t,\cdot,z)$ is convex, and $g(\omega,t, y,\cdot)$ is $\delta$-locally Lipschitz. In view of \ref{A:H1} and the fact that $2\delta<1+\delta$, we can apply Young's inequality to get that if $\delta_\theta y>0$, then for each $\eps>0$,
$$
\begin{array}{ll}
& \Dis g(\omega,t,y_1,z_1)-\theta g(\omega,t,y_2,z_2)\vspace{0.1cm}\\
\leq & \Dis g(\omega,t,y_1,z_1)- \theta g(\omega,t,y_2,z_1)+\theta |g(\omega,t,y_2,z_1)-g(\omega,t,y_2,z_2)|\vspace{0.1cm}\\
\leq & \Dis g(\omega,t,\theta y_2+(1-\theta)\delta_\theta y, z_1)-\theta g(\omega,t,y_2,z_1)
+\theta \gamma \left(1+|z_1|^\delta+|z_2|^\delta\right)|z_1-z_2|
\vspace{0.1cm}\\
\leq & \Dis (1-\theta)\left(|g(\omega,t, \delta_\theta y, z_1)-g(\omega,t, \delta_\theta y,0)|+g(\omega,t, \delta_\theta y,0)\right)\vspace{0.1cm}\\
& \Dis +\gamma \left(1+|z_1|^\delta+|z_2|^\delta\right)\left(|z_1-\theta z_2|+(1-\theta)|z_2|\right)\vspace{0.1cm}\\
\leq & \Dis (1-\theta)\left[\alpha_t(\omega)+\beta |\delta_\theta y|+\eps |\delta_\theta z|^2+c\left(1+|z_1|^{1+\delta}+|z_2|^{1+\delta}\right)\right],\vspace{0.1cm}\\
\end{array}
$$
where $c$ is a constant depending only on $(\gamma,\delta,\eps)$. Thus, the inequality \eqref{eq:2.10} holds with $\eps$, $\alpha_\cdot+c$, $\beta$ and $c$ instead of $\gamma$, $\alpha_\cdot$, $k$ and $\bar k$ respectively. The proposition is then proved.
\end{proof}

\begin{rmk}\label{rmk:2.5}
(i) Letting $y_1=y_2=y$ and $z_1=z_2=z$ in \eqref{eq:2.10} and \eqref{eq:2.11} respectively yields that
$$
{\bf 1}_{\{y>0\}} g(\omega,t,y,z)\leq \beta |y|+\gamma |z|^2 + \alpha_t(\omega)+2k |y|+2\bar k |z|^{1+\delta}\vspace{-0.1cm}
$$
and\vspace{-0.1cm}
$$
-{\bf 1}_{\{y<0\}} g(\omega,t,y,z)\leq \beta |y|+\gamma |z|^2 + \alpha_t(\omega)+2k |y|+2\bar k |z|^{1+\delta},
$$
whose combination implies assumption \ref{A:H1}.

(ii) Letting first $z_1=z_2=z$ in \eqref{eq:2.10} and \eqref{eq:2.11} and then letting $\theta\To 1$ yields that
$$
{\bf 1}_{\{y_1-y_2>0\}} \left(g(\omega,t,y_1,z)-g(\omega,t,y_2,z)\right)\leq \beta |y_1-y_2|,
$$
which means that $g$ satisfies the monotonicity condition in the state variable $y$.
\end{rmk}

The main result of this paper is stated as follows.\vspace{-0.1cm}

\begin{thm}\label{thm:2.6}
Suppose that the function $\psi$ is defined in \eqref{eq:2.1} and that $\xi$ is a terminal condition, $g$ is a generator which is continuous in the state variables $(y,z)$ and satisfies assumptions \ref{A:H1} and \ref{A:H2}, and $\E[\psi^p(T,|\xi|;\alpha_\cdot,\beta, \gamma)]<+\infty$ for each $p\geq 1$. Then, we have\vspace{0.1cm}

(i) If $g$ also satisfies assumption \ref{A:H4} with $\bar k=0$, then BSDE $(\xi,g)$ admits a unique solution $(Y_\cdot,Z_\cdot)$ such that for each $p\geq 1$, $\E\left[\sup_{t\in \T}\psi^p\left(t,|Y_t|;\alpha_\cdot,\beta, \gamma\right)\right]<+\infty$. Moreover, $Z_\cdot\in \mcal^p$ for each $p\geq 1$.\vspace{0.2cm}

(ii) If $g$ also satisfies assumptions \ref{A:H3} and \ref{A:H4}, then BSDE $(\xi,g)$ admits a unique solution $(Y_\cdot,Z_\cdot)$ such that for each $p\geq 1$, $\E\left[\sup_{t\in \T}\psi^p\left(t,|Y_t|;\alpha_\cdot,\beta, \gamma\right)\right]<+\infty$. Moreover, $\E[\exp(\eps \int_0^T |Z_s|^2{\rm d}s)]<+\infty$ for some $\eps>0$.
\end{thm}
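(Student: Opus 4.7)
The plan is to obtain existence and the stated integrability of $Z_\cdot$ directly from Propositions~\ref{pro:2.1} and~\ref{pro:2.2}, and then to prove uniqueness by a $\theta$-dilation argument in the spirit of \cite{BriandHu2008PTRF} powered by assumption \ref{A:H4}. Proposition~\ref{pro:2.1}(ii) supplies, for every $p\geq 1$, a solution with $\E[\sup_{t\in\T}\psi^p(t,|Y_t|;\alpha_\cdot,\beta,\gamma)]<+\infty$ and $Z_\cdot\in\mcal^p$, which settles the $Z$-integrability in part (i). In part (ii), combining these exponential $Y$-moments with \ref{A:H3} puts us in the hypotheses of Proposition~\ref{pro:2.2}, yielding some $\eps>0$ with $\E[\exp(\eps\int_0^T|Z_s|^2{\rm d}s)]<+\infty$, and in particular $\E[\exp(p\int_0^T|Z_s|^{1+\delta}{\rm d}s)]<+\infty$ for every $p>0$.

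For uniqueness, let $(Y_\cdot,Z_\cdot),(Y'_\cdot,Z'_\cdot)$ be two solutions in the relevant class and assume $g$ satisfies \eqref{eq:2.10} (the case \eqref{eq:2.11} is handled symmetrically). For $\theta\in(0,1)$ introduce
\[
U^\theta_t:=\frac{Y_t-\theta Y'_t}{1-\theta},\quad V^\theta_t:=\frac{Z_t-\theta Z'_t}{1-\theta},\quad h_t:=\alpha_t+k\bigl(|Y_t|+|Y'_t|\bigr)+\bar k\bigl(|Z_t|^{1+\delta}+|Z'_t|^{1+\delta}\bigr).
\]
Then $-{\rm d}U^\theta_t=(1-\theta)^{-1}\bigl(g(t,Y_t,Z_t)-\theta g(t,Y'_t,Z'_t)\bigr){\rm d}t-V^\theta_t\cdot{\rm d}B_t$ with $U^\theta_T=\xi$, and on $\{U^\theta_t>0\}$ condition \eqref{eq:2.10} gives $-{\rm d}U^\theta_t\leq\bigl(\beta U^\theta_t+\gamma|V^\theta_t|^2+h_t\bigr){\rm d}t-V^\theta_t\cdot{\rm d}B_t$. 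Applying It\^o-Tanaka to $\psi(t,(U^\theta_t)^+;h_\cdot,\beta,p\gamma)$ with $p\geq 2$, exactly as in the proof of Proposition~\ref{pro:2.1}(i) using \eqref{eq:2.2} (with $f=h_\cdot$, $\kappa=\beta$, $\lambda=p\gamma$) and \eqref{eq:2.3}, the drift on $\{U^\theta>0\}$ is absorbed by the weight $h$, the $|V^\theta|^2$-term is dominated by $\tfrac{1}{2}\psi_{xx}-\gamma\psi_x\geq 0$, and the local-time term at $0$ is non-negative; hence $\psi(\cdot,(U^\theta_\cdot)^+;h_\cdot,\beta,p\gamma)$ is a non-negative local submartingale.

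The main obstacle is to verify $C_p:=\E[\psi(T,|\xi|;h_\cdot,\beta,p\gamma)]<+\infty$, since only then does the standard localization-and-Fatou argument (carried out just as in the proof of Proposition~\ref{pro:2.1}(i)) upgrade the local submartingale property to $\E[\psi(t,(U^\theta_t)^+;h_\cdot,\beta,p\gamma)]\leq C_p$ for every $t\in\T$, with $C_p$ independent of $\theta$. In case (i), $\bar k=0$ so $h$ involves only $\alpha_\cdot,|Y|,|Y'|$, whose exponential moments of every order follow from the hypothesis on $\xi$ and Proposition~\ref{pro:2.1}(ii), and H\"older's inequality then yields $C_p<+\infty$. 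In case (ii), the extra $|Z|^{1+\delta}$ and $|Z'|^{1+\delta}$ terms in $h$ are exactly what Proposition~\ref{pro:2.2} handles, and H\"older's inequality again delivers $C_p<+\infty$. Dropping the non-negative factor $\int_0^t h_s e^{\beta s}{\rm d}s$ inside $\psi$ and using $e^{\beta t}\geq 1$, the previous bound reduces to $\E[\exp(p\gamma(U^\theta_t)^+)]\leq C_p$ uniformly in $\theta\in(0,1)$. On $\{Y_t>Y'_t\}$ one has $(U^\theta_t)^+=(Y_t-\theta Y'_t)/(1-\theta)\uparrow+\infty$ as $\theta\uparrow 1$, so Fatou's lemma forces $\mathbb{P}(Y_t>Y'_t)=0$ for every $t\in\T$. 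Interchanging the two solutions in the same argument (still via \eqref{eq:2.10}) yields the reverse inequality, whence $Y=Y'$; comparing the martingale parts of \eqref{eq:1.1} for the two solutions then forces $Z=Z'$.
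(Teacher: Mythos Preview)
Your proof is essentially the same as the paper's: existence via Propositions~\ref{pro:2.1}--\ref{pro:2.2}, then uniqueness by the $\theta$-dilation technique, applying It\^o--Tanaka to $\psi$ at the positive part of $U^\theta$ with the modified weight $h_\cdot$ (the paper's $\bar\alpha_\cdot$), and using H\"older to control the new exponential moments. The only substantive imprecision is the claim that verifying $C_p:=\E[\psi(T,|\xi|;h_\cdot,\beta,p\gamma)]<+\infty$ suffices for the localization-and-Fatou step. As in the proof of Proposition~\ref{pro:2.1}(i), passing to the limit in $\E[\psi(\tau_m^t,(U^\theta_{\tau_m^t})^+;h_\cdot,\beta,p\gamma)\mid\F_t]$ requires the process $\bigl(\psi(s,(U^\theta_s)^+;h_\cdot,\beta,p\gamma)\bigr)_{s\in\T}$ to be of class (D), not merely integrable at $T$; this is precisely what the paper checks in \eqref{eq:2.16}. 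Fortunately the very H\"older argument you invoke for $C_p$ also yields $\E[\sup_{t\in\T}\psi(t,(U^\theta_t)^+;h_\cdot,\beta,p\gamma)]<+\infty$ (for each fixed $\theta$), since $(U^\theta_t)^+\leq(|Y_t|+|Y'_t|)/(1-\theta)$ and all the ingredients of $\int_0^T h_s\,{\rm d}s$ have exponential moments of every order. With this correction your argument goes through; your Fatou conclusion from $\E[\exp(p\gamma(U^\theta_t)^+)]\leq C_p$ is a valid alternative to the paper's more direct route $\gamma(Y_t-\theta Y'_t)^+\leq (1-\theta)\,\E[\psi(T,|\xi|;h_\cdot,\beta,\gamma)\mid\F_t]\to 0$.
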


\begin{proof}
The existence is a direct consequence of Propositions \ref{pro:2.1} and \ref{pro:2.2}. We now show the uniqueness part. Let us assume that \eqref{eq:2.10} in \ref{A:H4} holds.

Let both $(Y_\cdot,Z_\cdot)$ and $(Y'_\cdot, Z'_\cdot)$ be solutions to BSDE $(\xi,g)$ such that for each $p\geq 1$,
\begin{equation}\label{eq:2.13}
\E\left[\sup_{t\in \T}\psi^p\left(t,|Y_t|;\alpha_\cdot,\beta, \gamma\right)\right]<+\infty\ \  {\rm and}\ \ \E\left[\sup_{t\in \T}\psi^p\left(t,|Y'_t|;\alpha_\cdot,\beta, \gamma\right)\right]<+\infty.
\end{equation}
We use the $\theta$-difference technique developed in \cite{BriandHu2008PTRF}. For each fixed $\theta\in (0,1)$, define
$$
\delta_\theta  U_\cdot:=\frac{Y_\cdot-\theta Y'_\cdot}{1-\theta}\ \  {\rm and} \ \ \delta_\theta  V_\cdot:=\frac{Z_\cdot-\theta Z'_\cdot}{1-\theta}.
$$
Then, the pair $(\delta_\theta  U_\cdot,\delta_\theta  V_\cdot)$ solves the following BSDE:\vspace{0.1cm}
\begin{equation}\label{eq:2.14}
  \delta_\theta  U_t=\xi +\int_t^T \delta_\theta g(s) {\rm d}s-\int_t^T \delta_\theta  V_s \cdot {\rm d}B_s, \ \ \ \ t\in\T,\vspace{0.1cm}
\end{equation}
where
$$
\begin{array}{lll}
\Dis \delta_\theta g(s):= \Dis \frac{1}{1-\theta}\left[g(s,Y_s,Z_s)-\theta g(s,Y'_s,Z'_s)\right].
\end{array}
$$
It follows from \eqref{eq:2.10} that \vspace{-0.2cm}
\begin{equation}\label{eq:2.15}
{\bf 1}_{\{\delta_\theta U_s>0\}} \delta_\theta g(s)\leq \bar\alpha_s+\beta |\delta_\theta U_s|+\gamma |\delta_\theta V_s|^2,\vspace{-0.1cm}
\end{equation}
with\vspace{-0.1cm}
$$
\bar\alpha_s:=\alpha_s+k(|Y_s|+|Y'_s|)
+\bar k\left(|Z_s|^{1+\delta}+|Z'_s|^{1+\delta}\right).\vspace{0.1cm}
$$

(i) Let $\bar k=0$. In view of \eqref{eq:2.13}, from H\"{o}lder's inequality it is not hard to verify that
\begin{equation}\label{eq:2.16}
\E\left[\sup_{t\in \T}\psi \left(t,|\delta_\theta U_t|;\bar\alpha_\cdot,\beta, \gamma\right)\right]<+\infty.
\end{equation}
Thus, in view of \eqref{eq:2.14}, \eqref{eq:2.15} and \eqref{eq:2.16}, we apply It\^{o}-Tanaka's formula to $\psi(s,\delta_\theta U^+_s; \bar\alpha_\cdot,\beta, \gamma)$ and argue as in the proof of \cref{pro:2.1} to deduce that for each $t\in\T$,
$$
\gamma\delta_\theta U^+_t\leq \psi(t,\delta_\theta U^+_t; \bar\alpha_\cdot,\beta, \gamma)\leq \E\left[\psi(T,\xi^+; \bar\alpha_\cdot,\beta, \gamma) |\F_t\right] \leq \E\left[\psi(T,|\xi|; \bar\alpha_\cdot,\beta, \gamma) |\F_t\right],
$$
and then\vspace{-0.1cm}
$$
\gamma (Y_t-\theta Y'_t)^+\leq (1-\theta)\E\left[\psi(T,|\xi|; \bar\alpha_\cdot,\beta, \gamma) |\F_t\right].
$$
Letting $\theta\To 1$ in the last inequality yields that $\ps$, for each $t\in \T$, $Y_t\leq Y'_t$. Thus, the desired conclusion follows by interchanging the position of $Y_\cdot$ and $Y'_\cdot$.

(ii) Let assumption \ref{A:H3} holds. Thanks to \cref{pro:2.2}, we have $\E[\exp(p\int_0^T |Z_s|^{1+\delta}{\rm d}s)]<+\infty$ and $\E[\exp(p\int_0^T |Z'_s|^{1+\delta}{\rm d}s)]<+\infty$ for each $p\geq 1$. Then, in view of \eqref{eq:2.13}, from H\"{o}lder's inequality we can conclude that \eqref{eq:2.16} still holds. Thus, the same computation as above yields the uniqueness result.

Finally, another case that \eqref{eq:2.11} holds can be proved in the same way. The proof is then complete.
\end{proof}

\begin{rmk}\label{rmk:2.7}
In view of \cref{pro:2.3} and \cref{rmk:2.4}, it is clear that \cref{thm:2.6} generalizes the uniqueness result for quadratic BSDEs with unbounded terminal conditions obtained in \cite{BriandHu2008PTRF}.
\end{rmk}

\begin{ex}\label{ex:2.8}
For each $(\omega,t,y,z)\in \Omega\times \T\times\R \times \R^d$, define
$$
g_1(\omega,t,y,z)=|z|^2-|z|^{3\over 2}+\sin |z|+y^2{\bf 1}_{y\leq 0}-|y|+|B_t(\omega)|\vspace{-0.1cm}
$$
and\vspace{-0.1cm}
$$
g_2(\omega,t,y,z)=-|z|^2+\sin |z|^{4\over 3}+|z|-y^3 {\bf 1}_{y\geq 0}+\sin |y|+|B_t(\omega)|.
$$
By virtue of \cref{pro:2.3}, it is not hard to verify that both $g_1$ and $g_2$ are continuous in $(y,z)$ and satisfy assumptions \ref{A:H1}---\ref{A:H4}. However, they are non-convex (non-concave) with respect to the variable $z$, and non-Lipschitz with respect to the variable $y$.
\end{ex}

\vspace{0.3cm}

%\section*{Acknowledgements}
%The authors are greatly grateful to the referee for his/her careful reading
%and many constructive suggestions.

%\noindent {\bf References}\vspace{0.2cm}

\setlength{\bibsep}{2pt}
\bibliographystyle{elsarticle-harv}
%\bibliographystyle{amsplain}
%\bibliography{BSDEs}

\end{document}